\title{Elementary equivalence \\ of stable linear groups over fields of characteristic 2 }
\author{E. I. Bunina\footnote{Bar Ilan University (Israel), Department of mathematics}, A. V. Mikhalev, I. O. Solovyev\footnote{M.V. Lomonosov Moscow State University,
Faculty of mechanics and mathematics}\\}
\newtheorem{lem}{Lemma}
\newtheorem{theo}{Theorem}
\newtheorem{theorem}{Theorem}
\newtheoremstyle{neosn}{0.5\topsep}{0.5\topsep}{\rm}{}{\bf}{.}{ }{\thmname{#1}\thmnumber{ #2}\thmnote{ {\mdseries#3}}}
\theoremstyle{neosn}
\newtheorem{definition}{Definition}
\newcommand{\diag}{\mathrm{diag}\,}
\newcommand{\GL}{\mathrm{GL}\,}
\newcommand{\SL}{\mathrm{SL}\,}
\newcommand{\Mat}{\mathrm{Mat}\,}
\newcommand{\PSL}{\mathrm{PSL}\,}
\newcommand{\PGL} {\mathrm{PGL}\,}
\def\q#1.{{\bf #1.}}
\begin{document}
\maketitle
In this paper, we prove a criterion of elementary equivalence of stable linear groups over fields of characteristic two.

\section{Introduction, History and Definitions}\

\subsection{Elementary equivalence.}

Two structures of the same signature are called \emph{elementary equivalent} if they satisfy the same first order sentences in their signature.
Any two finite structures with the same signature are elementarily equivalent if and only if they are isomorphic.
Any two isomorphic structures are elementarily equivalent, but the opposite is not always true.
For example, the field of complex numbers $\mathbb C$ and field of algebraic numbers $\overline{\mathbb Q}$ are elementarily equivalent, but they cannot be isomorphic due to difference in their cardinalities.

Tarski and Maltsev pioneered the theory of describing groups and rings from an elementarily equivalence standpoint.
Several complete results were obtained: for example, two algebraically closed fields are elementary equivalent if and only if they have the same characteristics;
two Abellian groups are elementary equivalent if and only if they have the same special ``characteristic numbers'' (Szmielew, \cite{Shmelew});
similar results with invariants were obtained for Boolean rings (Ershov--Tarski, \cite{Boolean}).

An outstanding result was the answer for the old problem raised by A.\,Tarski around 1945:
for free groups the elementary theory doesn't distinguish these groups (see the series of works of Kharlampovich--Myasnikov and Z.\,Sela, e.\,g. \cite{Kharlamp-Myasikov},~\cite{Sela}). The similar situation takes place for the torsion free hyperbolic groups (see Sela,~\cite{Sela2}).

\subsection{Maltsev-type theorems for linear groups.}

It is also interesting to study connections between logic properties of some basic structures and logic properties of structures derived from these basic structures.

First results of such type was obtained by Maltsev in 1961 in~\cite{Maltsev}.
He proved that the groups 
$\mathcal G_n(K_1)$ and $\mathcal G_m(K_2)$ (where $G=\GL,\SL,\PGL, \PSL$, $n,m\geqslant3$, $K_1$,~$K_2$
are fields of characteristics~$0$) are elementarily equivalent if and only if $m=n$ and the fields  $K_1$~and~$K_2$ are elementarily equivalent.

This type of correspondence are often called  \emph{Maltsev translation}.
It means that logic properties are completely translated from basic structures to derived structures and vice-versa.

In 1961--1971 Keisler (\cite{Keisler}) and Shelah (\cite{Shelah}) proved the next important Isomorphism theorem:

\begin{theorem}
Two models  $\mathcal U_1$ and $\mathcal U_2$ are elementarily equivalent if and only if
there exists an ultrafilter  $\mathcal F$ such that their ultrapowers coincide:
$$
\prod_{\mathcal F} \mathcal U_1\cong \prod_{\mathcal F} \mathcal U_2.
$$
\end{theorem}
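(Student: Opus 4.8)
The plan is to prove the two implications separately. One of them is immediate from {\L}o\'s's theorem; the other --- that elementarily equivalent structures have a common ultrapower --- is the substantial content, established by Keisler under the Generalized Continuum Hypothesis (GCH) and by Shelah in $\mathrm{ZFC}$.

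For the ``if'' direction, suppose $\mathcal F$ is an ultrafilter over an index set $I$ with $\prod_{\mathcal F}\mathcal U_1\cong\prod_{\mathcal F}\mathcal U_2$. By {\L}o\'s's theorem the diagonal embedding $\mathcal U_i\hookrightarrow\prod_{\mathcal F}\mathcal U_i$ is elementary, hence $\mathcal U_i\equiv\prod_{\mathcal F}\mathcal U_i$ for $i=1,2$. Since isomorphic structures satisfy the same first-order sentences, chaining $\mathcal U_1\equiv\prod_{\mathcal F}\mathcal U_1\cong\prod_{\mathcal F}\mathcal U_2\equiv\mathcal U_2$ gives $\mathcal U_1\equiv\mathcal U_2$.

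For the ``only if'' direction, assume $\mathcal U_1\equiv\mathcal U_2$, with common language $L$. The strategy is to exhibit a \emph{single} ultrafilter $\mathcal F$ such that $\prod_{\mathcal F}\mathcal U_1$ and $\prod_{\mathcal F}\mathcal U_2$ are both $\kappa^+$-saturated and of cardinality exactly $\kappa^+$ for one and the same infinite cardinal $\kappa$. Granting this, {\L}o\'s's theorem again gives $\prod_{\mathcal F}\mathcal U_1\equiv\mathcal U_1\equiv\mathcal U_2\equiv\prod_{\mathcal F}\mathcal U_2$, and then the classical back-and-forth argument --- any two elementarily equivalent $L$-structures that are $\kappa^+$-saturated and both of cardinality $\kappa^+$ are isomorphic --- produces the desired isomorphism $\prod_{\mathcal F}\mathcal U_1\cong\prod_{\mathcal F}\mathcal U_2$.

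To build $\mathcal F$, fix $\kappa\ge|L|+|U_1|+|U_2|+\aleph_0$ and let $\mathcal F$ be a countably incomplete, $\kappa^+$-good ultrafilter on $\kappa$ (good ultrafilters exist under GCH by Keisler, and in $\mathrm{ZFC}$ by Kunen). Keisler's theorem on good ultrafilters then yields that $\prod_{\mathcal F}\mathcal M$ is $\kappa^+$-saturated for every $L$-structure $\mathcal M$; in particular this applies to $\mathcal U_1$ and $\mathcal U_2$. For the cardinality, note on one hand that $|\prod_{\mathcal F}\mathcal M|\le|M|^\kappa\le\kappa^\kappa=2^\kappa$, and on the other hand that an infinite $\kappa^+$-saturated structure must realize the type $\{x\ne m:m\in M_0\}$ over any subset $M_0$ of size $\le\kappa$, so it cannot have cardinality $\le\kappa$; hence $|\prod_{\mathcal F}\mathcal M|\ge\kappa^+$. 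Under GCH, $2^\kappa=\kappa^+$, so both ultrapowers have cardinality exactly $\kappa^+$ and we are done. The main obstacle is removing the hypothesis of GCH, which is precisely Shelah's refinement: one replaces ``$\kappa^+$-saturated of cardinality $\kappa^+$'' by an isomorphism criterion for \emph{special} models of the same cardinality and carries out a more delicate combinatorial construction of an ultrafilter that controls the saturation and the size of the ultrapowers with no cardinal-arithmetic assumption. For our purposes it is enough to invoke the theorem in the form stated.
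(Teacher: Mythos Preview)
The paper does not prove this theorem at all: it is quoted as background, attributed to Keisler and Shelah with references, and then used as a tool. So there is no ``paper's own proof'' to compare against; your sketch supplies an argument where the paper supplies only a citation.

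As a sketch of the standard proof your write-up is essentially correct. The ``if'' direction via {\L}o\'s is fine. For the ``only if'' direction, the outline via $\kappa^+$-good ultrafilters, $\kappa^+$-saturation of the ultrapowers, and the uniqueness of saturated models of a given cardinality is the classical Keisler argument; you are honest that the cardinality computation $2^\kappa=\kappa^+$ uses GCH and that removing it is Shelah's contribution, which you then simply invoke. Two small points worth tightening if you want this to stand on its own: (i) the lower bound $|\prod_{\mathcal F}\mathcal U_i|\ge\kappa^+$ via saturation presumes the ultrapower is infinite, so the case where $\mathcal U_1$ (hence $\mathcal U_2$) is finite should be disposed of separately (trivially, since finite elementarily equivalent structures are isomorphic); (ii) the existence of $\kappa^+$-good ultrafilters is already a ZFC result (Kunen), so GCH enters only in the cardinality count, as you note later --- your parenthetical ``under GCH by Keisler'' slightly muddles where the hypothesis is actually used.
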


This theorem allowed Beidar and Mikhalev in 1992  (see~\cite{BeidMikh1}) to generalize Maltsev theorem for the case when  $K_1$ and $K_2$ are skewfields and prime associative rings.  
This approach for the groups $\GL_n$ was generalized in~\cite{Bunina_Bragin}  by the following result:

\medskip

\begin{theorem}\label{finite number idempotents}
Let $R_1$ and $R_2$ be associative rings with~$1$
$(1/2)$ with finite number of central idempotents and $m,n\geqslant 4$ $(m,n\geqslant 3)$. Then $\GL_m(R_1)\equiv \GL_n(R_2)$ if and only if there exist  central idempotents $e\in R$  and $f \in S$ such  that $e M_m(R)\equiv f M_n(S)$ and $(1-e)M_m(R)\equiv (1-f) M_n(S)^{op}$.
\end{theorem}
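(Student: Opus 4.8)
\medskip

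\noindent\textbf{Proof plan.} The plan is to reduce the statement, in both directions, to the Keisler--Shelah theorem recalled above, and in the hard direction to feed in the known structural description of abstract isomorphisms of general linear groups over associative rings containing $1/2$. Throughout write $R=R_1$, $S=R_2$, and for a ring $A$ let $A^{*}$ be its group of units, so $\GL_k(A)=(M_k(A))^{*}$. I will use the following routine facts about ultraproducts over an ultrafilter $\mathcal F$: (i) they commute with each of the functors $A\mapsto M_k(A)$, $A\mapsto A^{*}$ (invertibility of a matrix being expressible by one first-order formula) and $A\mapsto A^{op}$; (ii) if $e$ is a central idempotent of a ring $A$ with diagonal image $\bar e$ in $\prod_{\mathcal F}A$, then $\bar e\,\prod_{\mathcal F}A\cong\prod_{\mathcal F}(eA)$ as rings; (iii) $(A^{op})^{*}$ is the opposite group of $A^{*}$, hence $(A^{op})^{*}\cong A^{*}$; (iv) elementary equivalence is preserved by finite direct products of structures; (v) for a central idempotent $e\in A$ one has $M_k(A)\cong eM_k(A)\times(1-e)M_k(A)$ as rings, and the central idempotents of $M_k(A)$ are exactly the scalar matrices over central idempotents of $A$.

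\smallskip

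\noindent\emph{Direction $(\Leftarrow)$.} Suppose $e\in R$, $f\in S$ are central idempotents with $eM_m(R)\equiv fM_n(S)$ and $(1-e)M_m(R)\equiv(1-f)M_n(S)^{op}$. Applying Keisler--Shelah to the first equivalence gives an ultrafilter with $\prod_{\mathcal F}(eM_m(R))\cong\prod_{\mathcal F}(fM_n(S))$; passing to unit groups by (i) and then back through Keisler--Shelah yields $(eM_m(R))^{*}\equiv(fM_n(S))^{*}$. The same argument applied to the second equivalence, together with (iii), gives $((1-e)M_m(R))^{*}\equiv((1-f)M_n(S))^{*}$. Multiplying these two equivalences of structures, which is legitimate by (iv), and identifying the two sides by means of (v), we obtain $\GL_m(R)\equiv\GL_n(S)$.

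\smallskip

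\noindent\emph{Direction $(\Rightarrow)$.} Suppose $\GL_m(R)\equiv\GL_n(S)$. By Keisler--Shelah there is an ultrafilter $\mathcal F$ with $\prod_{\mathcal F}\GL_m(R)\cong\prod_{\mathcal F}\GL_n(S)$; setting $\widetilde R=\prod_{\mathcal F}R$ and $\widetilde S=\prod_{\mathcal F}S$, fact (i) turns this into an abstract group isomorphism $\GL_m(\widetilde R)\cong\GL_n(\widetilde S)$. Containing $1/2$ is a first-order property, so $\widetilde R,\widetilde S$ still contain $1/2$, and $m,n\geqslant 4$ (resp.\ $\geqslant 3$); hence the structural theorem on isomorphisms of general linear groups over rings containing $1/2$ applies, yielding central idempotents $\widetilde e$ of $M_m(\widetilde R)$ and $\widetilde f$ of $M_n(\widetilde S)$ together with ring isomorphisms $\widetilde e\,M_m(\widetilde R)\cong\widetilde f\,M_n(\widetilde S)$ and $(1-\widetilde e)M_m(\widetilde R)\cong((1-\widetilde f)M_n(\widetilde S))^{op}$. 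Now I would descend $\widetilde e,\widetilde f$ to $R,S$: by hypothesis $R$ has a finite number $k$ of central idempotents, and since ``there are exactly $k$ central idempotents'' is a first-order sentence it holds in $\widetilde R$ too, so every central idempotent of $\widetilde R$ is the diagonal image of one of $R$; combined with the last clause of (v) this shows that $\widetilde e$ is the scalar matrix over the diagonal image of a central idempotent $e\in R$, and likewise $\widetilde f$ comes from a central idempotent $f\in S$. Under the isomorphism $M_m(\widetilde R)\cong\prod_{\mathcal F}M_m(R)$ of (i), the corners $\widetilde e\,M_m(\widetilde R)$ and $(1-\widetilde e)M_m(\widetilde R)$ become $\prod_{\mathcal F}(eM_m(R))$ and $\prod_{\mathcal F}((1-e)M_m(R))$ by (ii), and similarly over $S$; so the two displayed ring isomorphisms turn, using (i) once more for $(-)^{op}$, into $\prod_{\mathcal F}(eM_m(R))\cong\prod_{\mathcal F}(fM_n(S))$ and $\prod_{\mathcal F}((1-e)M_m(R))\cong\prod_{\mathcal F}((1-f)M_n(S)^{op})$, and a final application of Keisler--Shelah gives $eM_m(R)\equiv fM_n(S)$ and $(1-e)M_m(R)\equiv(1-f)M_n(S)^{op}$.

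\smallskip

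\noindent\textbf{Main obstacle.} The ultraproduct bookkeeping above is soft; the one substantial ingredient, and the step I expect to be the main obstacle, is the structural description of abstract isomorphisms $\GL_m(A)\cong\GL_n(B)$ between general linear groups over rings containing $1/2$ with $m,n\geqslant 4$ (resp.\ $\geqslant 3$): the fact that every such isomorphism must split the matrix ring, up to a central idempotent, into a part carried isomorphically and a part carried anti-isomorphically onto the other matrix ring. It is precisely there that the restriction on $m,n$ and the hypothesis $1/2$ are used; the finiteness of the set of central idempotents enters only afterwards, to make that splitting rigid enough to be pulled back from the ultrapowers to $R$ and $S$.
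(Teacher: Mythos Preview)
The paper does not prove this theorem; it is quoted as a known result from \cite{Bunina_Bragin}, after recalling that the Keisler--Shelah isomorphism theorem ``allowed Beidar and Mikhalev in 1992 \dots\ to generalize Maltsev theorem'' and that ``this approach for the groups $\GL_n$ was generalized in~\cite{Bunina_Bragin}''. So there is nothing to compare against in the present paper beyond that one-line pointer to the method.

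That said, your plan is exactly the Beidar--Mikhalev strategy the paper alludes to: pass to ultrapowers via Keisler--Shelah, invoke the Golubchik--Mikhalev description of abstract isomorphisms of $\GL$ over associative rings with $1/2$ (the references \cite{BeidMikh20}, \cite{BeidMikh25} in the paper's bibliography), and then use the finiteness of the set of central idempotents to pull the splitting idempotent back from the ultrapower to the base ring. Your bookkeeping in both directions is sound; in particular your fact (iv) (finite direct products preserve elementary equivalence) is a standard consequence of Feferman--Vaught, and your descent argument for the idempotents is correct because the $k$ diagonal images of the central idempotents of $R$ are distinct and hence exhaust the $k$ central idempotents of $\widetilde R$. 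You have also located the genuine content in the right place: the isomorphism theorem for $\GL_m$ over rings with $1/2$ is precisely where the hypotheses $1/2$ and $m,n\geqslant 4$ (resp.\ $\geqslant 3$) enter, and everything else is soft model theory.
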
 

\medskip

Continuation of investigations in this field were the papers of Bunina 1998--2010.
Similar to Maltsev's results were obtained not only for classical linear groups $\GL, \PGL, \SL, \PSL$, but also for unitary linear groups over fields, skewfields, and rings with involutions (see  \cite{Bunina_intro2}, \cite{Bunina_intro1}), for Chevalley groups over fields (\cite{Bunina_intro3}), over local rings (see \cite{Bunina_intro4}) and arbitrary commutative rings (see~\cite{Bun_Chevalley_arb}), and also for other different derivative structures.

In some cases elementary equivalence of derivative structures (even a little similar to linear groups) is equivalent not to elementary but to more strong equivalence of initial structures. Often second-order equivalence or some of its limitations can appear.

For example, in 2000, V. Tolstykh \cite{Tolstyh} stated the connection between second-order properties of skewfields and first-order properties of automorphism groups of infinite-dimensional linear spaces over them. In 2003, E. I. Bunina and A. V. Mikhalev (see~\cite{categories}) stated the connection between second-order properties of associative rings and elementary properties of categories of modules, endomorphism rings, automorphism groups, and projective spaces of infinite rank over these rings.
Similar results were obtained also for endomorphism rings and automorphism groups of Abelian   $p$-groups: Bunina, Mikhalev and Roizner (see~\cite{BunMikhRoizner}) proved that two endomorphism rings of Abelian  $p$-groups or two automorphism groups  of Abelian $p$-groups for $p\geqslant 3$ are elementarily equivalent  if and only if initial Abelian groups are equivalent in the full second order logic (in one exceptional case in its limitation).

\medskip

\subsection{Stable Linear Groups}
Let $R$ be an associative ring with unit. The following definitions correspond to~\cite{HahnOMeara}.
\begin{definition}
Denote by $\Mat_{\infty}(R)$ the ring of matrices with countable number of rows and columns such that out of the main diagonal there are only a finite number of nonzero elements, and also there exists a number $n$ such that for any $i \geqslant n$ the elements $r_{ii} = a$, $a\in R$.
\end{definition}
It is clear that $\Mat_{\infty}(R)$ is a ring.

Let $A\in \GL_n(R)$. We identify $A$ with an element from $\GL_n(R)$ by the following rule: $A$ is written in the left upper corner, starting from the position $(n,n)$ the diagonal contains $1$, and all other places contain zeros.

We preserve the notation $\GL_n(R)$ for the obtained subgroups $\Mat_{\infty}(R)$. It is clear that $\GL_n(R)$ are subgroups of the groups of invertible elements of the ring $\Mat_{\infty}(R)$, and also it is clear that for $m \geq n$ we have $\GL_n(R) \subseteq \GL_m(R)$.

\begin{definition}
Let us set
$$
\GL(R) = \bigcup\limits_{n \geqslant 1} \GL_n(R)\qquad 
(\GL_n(R) \subseteq \Mat_{\infty}(R)).
$$
It is a subgroup of the group of invertible elements of the ring $\Mat_{\infty}(R)$. Let us call it the \emph{stable linear group}.
\end{definition}

For stable linear groups their automorphisms were described: Atkarkaya described automorphisms of stable linear groups E(R) and GL(R) over commutative local rings $R$ with 1/2 (see~\cite{atkarskaya}).

In our previous paper (see~\cite{bunmikhsol}) it was proved  that despite an ``infinite'' dimension of the stable group, from elementary equivalence of two arbitrary rings with unit it follows elementary equivalence of stable linear groups over them, i.e., we do not need higher-order logic.  In the same paper we proved that from elementary equivalence of stable linear groups over commutative local rings with 1/2 it follows elementary equivalence of the corresponding rings.

In the given work we  extend this previous result to the stable linear groups over fields of characteristic~$2$.

\section{Proof of the main theorem}

Consider the group $\GL(R)$, where $R$ is a field of characteristic 2.
Let us denote its unit by~$E$.
For more simple formulas we will write $A \sim B$ instead of $\exists U\ A=UBU^{-1}$  and call  $A$ to be   \emph{conjugate} to $B$.

Our first goal is to define elementarily a subgroup isomorphic to the group $\GL_2(R)$. We will use matrices $\diag[1, 1, T]$, where
$$T \sim
\diag\left[
\begin{pmatrix}
1 & 1\\
1 & 0
\end{pmatrix},
\dots
\right]$$

The first part of the proof depends on existence of third roots of unity in the field.

\subsection{Fields containing third roots of unity}
Let $K$ be a field of characteristic $2$ containing third roots of unity $1, \xi, \xi^2$.

The following lemma is proved, for example, in~\cite{kaleeva}; it follows from classical linear algebra results.
\begin{lem}\label{trans}
For any  number of mutually commuting third-order elements of the group $\GL_n(K)$, there exists a basis in which all of them have diagonal form with third roots of unity on the diagonal. 

It is clear that for a finite set of commuting matrices the same condition holds also in the stable linear group $\GL(K)$.
\end{lem}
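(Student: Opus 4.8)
The plan is to reduce the statement to the classical fact that a finite family of commuting semisimple matrices can be simultaneously diagonalized. First I would observe that each element $A$ of order dividing $3$ in $\GL_n(K)$ satisfies $A^3 = E$, hence its minimal polynomial divides $X^3 - 1 = (X-1)(X-\xi)(X-\xi^2)$ over $K$ (here we use that $K$ contains $1,\xi,\xi^2$, so this polynomial splits into distinct linear factors; distinctness uses $\mathrm{char}\,K = 2 \ne 3$). Since the minimal polynomial has no repeated roots and splits, $A$ is diagonalizable over $K$, with eigenvalues among $\{1,\xi,\xi^2\}$. This handles a single matrix.

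Next I would invoke the standard simultaneous-diagonalization argument for a commuting family. Given mutually commuting third-order elements $A_1,\dots,A_k$, I proceed by induction on $k$: pick a basis diagonalizing $A_1$, which decomposes $K^n$ into the eigenspaces $V_1 = \ker(A_1 - E)$, $V_\xi = \ker(A_1 - \xi E)$, $V_{\xi^2} = \ker(A_1 - \xi^2 E)$. Because each $A_j$ commutes with $A_1$, each $A_j$ preserves these eigenspaces; restricting $A_2,\dots,A_k$ to each $V_\mu$ gives commuting families of elements whose order still divides $3$, so by the inductive hypothesis each block can be simultaneously diagonalized, and assembling the block bases gives a single basis diagonalizing all of $A_1,\dots,A_k$. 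This is exactly the content quoted from~\cite{kaleeva}, so in the writeup I would simply cite it rather than reprove it; the only point worth making explicitly is that the diagonal entries lie in $\{1,\xi,\xi^2\}$ because they are eigenvalues of finite-order elements, as established in the first step.

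For the second sentence of the lemma — the passage to the stable group $\GL(K)$ — the argument is essentially bookkeeping. A finite set $A_1,\dots,A_k$ of commuting third-order elements of $\GL(K) = \bigcup_n \GL_n(K)$ lies, by definition of the union, in some common $\GL_N(K)$; there we apply the finite-dimensional result to get a base change $U \in \GL_N(K) \subseteq \GL(K)$ diagonalizing all of them. One should note the identification convention from the excerpt: when $A \in \GL_n(K)$ is viewed inside $\GL_N(K)$ for $N \geq n$, the extra diagonal entries are $1$, which are third roots of unity, so the ``diagonal form with third roots of unity on the diagonal'' description is preserved under the stabilization maps and no inconsistency arises.

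I do not expect a genuine obstacle here: the statement is a routine consequence of linear algebra, and the main thing to be careful about is the characteristic hypothesis. The one place where $\mathrm{char}\,K = 2$ actually matters is in guaranteeing $3 \ne 0$ in $K$, so that $X^3 - 1$ is separable and the order-$3$ elements are genuinely semisimple; if the characteristic were $3$, an order-$3$ element could be unipotent ($X^3 - 1 = (X-1)^3$) and the lemma would fail. So the cleanest exposition is: (i) reduce to the separability of $X^3-1$ over $K$, (ii) cite the simultaneous diagonalization of commuting separable (diagonalizable) operators, (iii) remark that eigenvalues are cube roots of unity, (iv) lift to $\GL(K)$ by choosing a large enough $N$.
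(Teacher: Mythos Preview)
Your proposal is correct and matches the paper's treatment: the paper does not give an independent proof but simply cites \cite{kaleeva} and remarks that the lemma ``follows from classical linear algebra results,'' which is precisely the simultaneous-diagonalization argument you outline. Your extra care about separability of $X^3-1$ in characteristic~$2$ and the bookkeeping for passing to $\GL(K)$ by choosing a common $N$ are exactly the details one would fill in, and there is nothing more in the paper to compare against.
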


Let $A$ be a set of matrices. Denote by $|||A|||$ the number of distinct conjugacy classes of $A$.

\begin{lem}
Let us consider  a set $D=[d_1, d_2, d_3, \ldots, d_k]$ of elements of $K$. If $D$ includes $[\xi, \xi]$ or $[\xi, \xi^2]$, then $||| \{AB :A\sim \diag[D], B \sim \diag[D], AB = BA \} ||| >2$.
\end{lem}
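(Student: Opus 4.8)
The plan is to reduce the statement to an elementary count of eigenvalue multiplicities. Write $p$ (resp.\ $q$) for the number of entries of $D$ equal to $\xi$ (resp.\ $\xi^2$); since $\xi$ is a primitive cube root of unity, the hypothesis that $D$ includes $[\xi,\xi]$ or $[\xi,\xi^2]$ says exactly that $p\geqslant2$, or that $p\geqslant1$ and $q\geqslant1$ (in particular $p\geqslant1$, so $\diag[D]\neq E$ and every $A\sim\diag[D]$ has order $3$). Given $A,B$ as in the statement, $A$ and $B$ are commuting order-$3$ elements, so by Lemma~\ref{trans} the pair $(A,B)$ is simultaneously conjugate to a pair of diagonal matrices with all entries in $\{1,\xi,\xi^2\}$; conjugation does not change $|||\cdot|||$, so I may assume $A=\diag[(a_i)]$ and $B=\diag[(b_i)]$ with almost all $a_i=b_i=1$, with exactly $p$ of the $a_i$ (and exactly $p$ of the $b_i$) equal to $\xi$, and exactly $q$ equal to $\xi^2$. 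Then $AB=\diag[(a_ib_i)]$ again has all eigenvalues in $\{1,\xi,\xi^2\}$, and in $\GL(K)$ such a matrix is conjugate to the diagonal matrix with $r$ entries $\xi$, $s$ entries $\xi^2$ and $1$ elsewhere, where $(r,s)$ is the pair of multiplicities of $\xi$ and $\xi^2$ --- a complete conjugacy invariant for such matrices. Hence $|||\{AB:\ldots\}|||$ equals the number of pairs $\bigl(\#\{i:a_ib_i=\xi\},\#\{i:a_ib_i=\xi^2\}\bigr)$ that can be attained, and it suffices to produce three of them.

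I would produce three by choosing three ``coincidence patterns'' for the supports of $A$ and $B$ (the sets of coordinates where they differ from $1$). (i)~Disjoint supports: put all the $\xi$'s and $\xi^2$'s of $A$ and of $B$ on pairwise distinct coordinates; then $AB$ has $2p$ entries equal to $\xi$ and $2q$ equal to $\xi^2$. (ii)~Supports disjoint except on $k$ coordinates where both $A$ and $B$ equal $\xi$: since $\xi\cdot\xi=\xi^2$ this replaces $2k$ of the $\xi$-entries by $k$ entries $\xi^2$, giving $AB$ with $2p-2k$ entries $\xi$ and $2q+k$ entries $\xi^2$; this is legal for $0\leqslant k\leqslant p$, since balancing each coincidence against one $\xi$ of each factor leaves both diagonals with exactly $p$ copies of $\xi$ and $q$ of $\xi^2$. (iii)~Supports disjoint except on one coordinate where both equal $\xi^2$: since $\xi^2\cdot\xi^2=\xi$ this gives $AB$ with $2p+1$ entries $\xi$ and $2q-2$ entries $\xi^2$, and is legal when $q\geqslant1$. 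If $p\geqslant2$, patterns (i), (ii) with $k=1$, and (ii) with $k=2$ give products whose numbers of $\xi$-eigenvalues are $2p$, $2p-2$, $2p-4$: three distinct nonnegative values, hence three pairwise non-conjugate products. If $p=1$ and $q\geqslant1$, patterns (i), (ii) with $k=1$, and (iii) give products with $2$, $0$, $3$ entries $\xi$ respectively, again three distinct values (the $\xi^2$-counts $2q$, $2q+1$, $2q-2$ being all $\geqslant0$). In either case $|||\{AB:\ldots\}|||\geqslant3>2$.

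The only step that is not pure bookkeeping, and which I would state as a preliminary remark, is the conjugacy criterion used above: an element of $\GL(K)=\bigcup_n\GL_n(K)$ agrees with $E$ outside a finite block, so the elements in play are diagonalizable over $K$ with spectrum contained in $\{1,\xi,\xi^2\}$, and two such are conjugate in $\GL(K)$ if and only if the multiplicities of $\xi$ and of $\xi^2$ agree (one direction being invariance of the spectrum, the other a coordinate permutation, using that infinitely many coordinates carry the value $1$). Beyond that, one only checks that the multiplicities appearing in each $AB$ are nonnegative, which is precisely what forces the two cases $p\geqslant2$ and $p=1,\,q\geqslant1$ into which the hypothesis splits. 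It is also worth recording the boundary case $p=1,\,q=0$: there the same analysis yields only the two classes of $\diag[\xi^2]$ and $\diag[\xi,\xi]$, which is why the lemma must exclude it.
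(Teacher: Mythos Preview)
Your argument follows the same idea as the paper's: exhibit three explicit products $AB$ with pairwise distinct eigenvalue multisets. The paper does this more concretely --- it fixes the entries $d_1,\dots,d_{k-2}$ in the same positions for both factors and then permutes only the two distinguished entries ($\xi,\xi$, resp.\ $\xi,\xi^2$) among four slots padded with $1$'s --- while you organise the same computation via the global multiplicity counts $(p,q)$.

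Two points need adjustment. First, the lemma as stated does not assume that the $d_i$ lie in $\{1,\xi,\xi^2\}$, so your claim that every $A\sim\diag[D]$ has order~$3$, and hence your appeal to Lemma~\ref{trans}, are unjustified in general. This step is in any case unnecessary: since only a lower bound on $|||\cdot|||$ is required, you may simply \emph{choose} $A$ and $B$ to be diagonal from the outset and drop the simultaneous-diagonalisation reduction (and with it the claim that $|||\cdot|||$ \emph{equals} the number of attainable $(r,s)$-pairs). Second, and for the same reason, your constructed $A,B$ place only the $\xi$- and $\xi^2$-entries of $D$ and fill the rest with $1$'s; if $D$ contains an entry outside $\{1,\xi,\xi^2\}$, these $A,B$ are not conjugate to $\diag[D]$. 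The paper's device handles this cleanly: put all the remaining $d_i$ in the same fixed positions in both $A$ and $B$, so that they contribute a common block of values $d_i^2$ to every product and do not affect the distinctness of the three $\xi$-counts ($2p,\,2p-2,\,2p-4$ or $2,\,0,\,3$) that you produce. With these two small fixes your proof is correct and essentially coincides with the paper's.
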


\begin{proof}
To prove this lemma it is sufficient to consider only one pair of elements $d_i$, since other elements can be just fixed at the first positions. Also it is sufficient to consider only permutation of diagonal elements.

First consider the case when $D$ contains $[\xi, \xi]$.
We assume that these elements are corresponded to $d_{k-1}, d_k$. The first $k-2$ elements are fixed, under multiplication they give the same set of eigenvalues. 
We can obtain the following matrices: 
\begin{align*}
\diag[1,1,\xi,\xi] \cdot \diag[\xi,\xi,1,1]& = \diag[\xi,\xi,\xi,\xi],\\
\diag[1,1,\xi,\xi] \cdot \diag[\xi,1,\xi,1] &= \diag[\xi,1,\xi^2,\xi],\\
\diag[1,1,\xi,\xi] \cdot \diag[1,1,\xi,\xi] &= \diag[1,1,\xi^2,\xi^2].
\end{align*}
We omitted for simplicity of reading elements $d_1, d_2, d_3, \ldots, d_{k-2}$ and final units of stable matrices.
All these  matrices have different sets of eigenvalues, so they are pairwise non-conjugate.

If $D$ contains $[\xi, \xi^2]$, then 
\begin{align*}
\diag[1,1,\xi,\xi^2] \cdot \diag[1,1,\xi,\xi^2]& = \diag[1,1,\xi^2,\xi],\\
\diag[1,1,\xi,\xi^2] \cdot \diag[\xi,\xi^2,1,1] &= \diag[\xi,\xi^2,\xi,\xi^2],\\
\diag[1,1,\xi,\xi^2] \cdot \diag[1,\xi^2,\xi,1] &= \diag[1,\xi^2,\xi^2,\xi^2],
\end{align*}
the same situation.
\end{proof}

As a result of these two lemmas we obtain the following lemma:

\begin{lem}
Let $A^3 = E$. If $||| \{BC :B\sim A, C \sim A, CB = BC \} ||| = 2$, then $A \sim \diag[\xi]$ (or $A \sim \diag[\xi^2]$).
\end{lem}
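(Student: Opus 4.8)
The plan is to reduce everything, via the conjugacy invariance of the displayed quantity, to a diagonal matrix and then to apply the two preceding lemmas. First I would note that $|||\{BC : B\sim A,\ C\sim A,\ CB=BC\}|||$ depends only on the conjugacy class of $A$, so we may assume $A=\diag[D]$. Indeed, $A^3=E$ means the minimal polynomial of $A$ divides $x^3-1=(x-1)(x-\xi)(x-\xi^2)$, which is separable in characteristic $2$, so $A$ is diagonalizable; and since $A$ lies in some $\GL_n(K)$, it is conjugate to $\diag[D]$ with $D=[d_1,\dots,d_k]$, $d_i\in\{1,\xi,\xi^2\}$, and all further diagonal entries equal to $1$. (This is also precisely the content of Lemma~\ref{trans} applied to the single element $A$.)

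Next I would feed this $D$ to the previous lemma: if $D$ contains $[\xi,\xi]$ or $[\xi,\xi^2]$, then $|||\dots|||>2$; and by repeating that lemma's computation with $\xi$ and $\xi^2$ interchanged (the three displayed products are replaced by their $\xi\leftrightarrow\xi^2$ analogues, the pair $[\xi,\xi^2]$ being symmetric under this swap), we also get $|||\dots|||>2$ whenever $D$ contains $[\xi^2,\xi^2]$. Hence the hypothesis $|||\dots|||=2$ forces $D$ to contain none of the pairs $[\xi,\xi]$, $[\xi,\xi^2]$, $[\xi^2,\xi^2]$, i.e. at most one entry of $D$ differs from $1$. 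The remaining degenerate possibility $D=[1,\dots,1]$, that is $A=E$, is excluded directly: then the only conjugates of $A$ are $E$ itself, so the set $\{BC:\dots\}$ equals $\{E\}$ and $|||\{E\}|||=1\ne2$. Therefore exactly one entry of $D$ equals $\xi$ or $\xi^2$, which is precisely $A\sim\diag[\xi]$ or $A\sim\diag[\xi^2]$.

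The only point requiring a little care is the $\xi\leftrightarrow\xi^2$ symmetry used to cover the $[\xi^2,\xi^2]$ case; since $\xi^2$ is also a primitive cube root of unity, this amounts to a relabeling inside the previous lemma's argument and is not a genuine obstacle. As a consistency check one may also verify the converse: if $A\sim\diag[\xi]$, then two commuting conjugates $B,C$ of $A$ are simultaneously diagonalized by Lemma~\ref{trans}, and $BC$ is then conjugate either to $\diag[\xi^2]$ (when the single non-unit eigenvectors coincide) or to $\diag[\xi,\xi]$ (when they are distinct), so that $|||\dots|||=2$ really holds — showing the characterization is exact.
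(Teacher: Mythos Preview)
Your proof is correct and is exactly the argument the paper intends: the paper states the lemma simply ``as a result of these two lemmas'' without spelling out a proof, and you have filled in precisely those details --- diagonalizing $A$ via Lemma~\ref{trans}, invoking Lemma~2 (together with the $\xi\leftrightarrow\xi^2$ relabeling for the $[\xi^2,\xi^2]$ case), and ruling out $A=E$. The consistency check at the end is a nice addition but not needed for the stated implication.
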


This lemma gives a method how to elementarily define a matrix conjugated to one of $\diag[\xi]$ and $\diag[\xi^2]$.
\begin{equation}
\begin{split}
\varphi(A) :=\ & \exists X_1 \exists X_2 \exists Y_1 \exists Y_2 \forall Z_1 \forall Z_2\  
(A^3 = E) \land \lnot(A = E) \land\\
&\land (X_1 \sim X_2 \sim Y_1 \sim Y_2 \sim A)\land(X_1X_2 = X_2X_1) \land (Y_1Y_2=Y_2Y_1) \land\\
&\land \Big(\big((Z_1 \sim A) \land (Z_2 \sim A) \land (Z_1Z_2=Z_2Z_1)\big) \rightarrow \big((Z_1Z_2 \sim Y_1Y_2)\lor(Z_1Z_2 \sim X_1X_2)\big)\Big)
\end{split}
\end{equation}
Suppose this formula holds for $A$. Assume that $A \sim \diag[\xi]$. It is possible to obtain according to a replacement of notations $\xi^2 \rightarrow \xi', \xi \rightarrow \xi'^2$. Then $A^2 \sim \diag[\xi^2]$.

Consider the formula
$$
\psi(B) := \exists X_1 \exists X_2 (X_1 \sim A)\land(X_2 \sim A^2) \land (X_1X_2=X_2X_1) \land(X_1X_2 = B) \land \lnot\varphi(B).
$$
If this formula holds for $B$, then $B \sim \diag[\xi, \xi^2]$.

Let $X_1 \sim X_2 \sim A;\ X_1X_2=X_2X_1$. We can suppose that we chose a basis in such that $X_1 = \diag[\xi, 1, \ldots], X_2 = \diag[1, \xi, \ldots]$.

Let us consider the formula
$$
\theta(C) := (CX_1=X_1C)\land (CX_2=X_2C)\land(C\sim B)
\land \bigwedge_i \lnot\varphi(BX_i)
\land \bigwedge_i \lnot\varphi(BX_i^2 ).
$$

If this formula holds for $C = (c_{ij})$, then since $c$ commutes with $X_i, i=1,2$, we have $c_{1j}=c_{i1} = 0, i,j>1; c_{2j}=c_{i2} = 0, i,j>2$. $C$ has order~$3$, so $c_{11}^3=1,c_{22}^3=1$. Suppose that $c_{11} \neq 1$. Then either $c_{11} = \xi$, or $c_{11} = \xi^2$. Suppose (without loss of generality) that $c_{11} = \xi$.
Then, formula $\varphi$ holds for $CX_1$, it contradicts  to  definition of $\theta$. Therefore $c_{11} = 1$, and similarly $c_{22} = 1$.
Then $C = \diag[1,1,T],$  where $T \sim \diag[\xi, \xi^2]$.

\begin{lem}\label{sim}
$\diag[\xi, \xi^2] \sim 
\begin{pmatrix}
1 & 1\\
1 & 0
\end{pmatrix}.
$
\end{lem}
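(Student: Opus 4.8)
The plan is to recognize that over a field of characteristic $2$ the matrix $M = \begin{pmatrix} 1 & 1 \\ 1 & 0 \end{pmatrix}$ is semisimple with spectrum $\{\xi, \xi^2\}$, and then to invoke the standard fact that two diagonalizable matrices with the same multiset of eigenvalues are conjugate. Concretely, I would first compute the characteristic polynomial $\det(\lambda E - M) = (\lambda - 1)\lambda - 1 = \lambda^2 - \lambda - 1$, which in characteristic $2$ becomes $\lambda^2 + \lambda + 1$. Since $\lambda^3 - 1 = (\lambda - 1)(\lambda^2 + \lambda + 1)$, the roots of this polynomial are exactly the primitive cube roots of unity $\xi$ and $\xi^2$, which lie in $K$ by the hypothesis of this subsection.

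Next I would note that $\xi \neq \xi^2$: otherwise $\xi = 1$, contradicting that $\xi$ is a primitive cube root of unity (here one uses $3 \neq 0$ in $K$, valid since $\operatorname{char} K = 2$). Hence the characteristic polynomial of $M$ has two distinct roots in $K$, so $M$ is diagonalizable over $K$ and is therefore conjugate in $\GL_2(K)$ to $\diag[\xi, \xi^2]$. Padding both matrices with an identity block shows the conjugacy holds inside the stable group $\GL(K)$ as well, which is the sense of $\sim$ used throughout.

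If an explicit witness is wanted, I would exhibit the eigenvectors directly: using $\xi^2 = \xi + 1$ in characteristic $2$ one checks that $M \begin{pmatrix} \xi \\ 1 \end{pmatrix} = \xi \begin{pmatrix} \xi \\ 1 \end{pmatrix}$ and $M \begin{pmatrix} \xi^2 \\ 1 \end{pmatrix} = \xi^2 \begin{pmatrix} \xi^2 \\ 1 \end{pmatrix}$, so with $U = \begin{pmatrix} \xi & \xi^2 \\ 1 & 1 \end{pmatrix}$ (invertible, as $\det U = \xi - \xi^2 \neq 0$) we get $U^{-1} M U = \diag[\xi, \xi^2]$.

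There is essentially no hard step here; the only points deserving a word of care are the reduction of the characteristic polynomial modulo $2$ and the distinctness of the two eigenvalues, both immediate. The lemma is really just the piece of linear algebra that certifies that the elementary construction carried out above lands on a copy of the concrete matrix $\begin{pmatrix} 1 & 1 \\ 1 & 0 \end{pmatrix}$ rather than merely on an abstract diagonal matrix, so that the subsequent formulas can refer to it.
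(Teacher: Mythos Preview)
Your proof is correct and follows essentially the same approach as the paper: both compute the characteristic polynomial $\lambda^2+\lambda+1$ of $\begin{pmatrix}1&1\\1&0\end{pmatrix}$ and identify its roots as the primitive cube roots $\xi,\xi^2$, whence the matrix is conjugate to $\diag[\xi,\xi^2]$. Your version is in fact a bit more explicit (you note distinctness of the eigenvalues and exhibit a conjugating matrix), whereas the paper verifies $P(\xi)=0$ via the identity $\xi P(\xi)=P(\xi)$ and leaves the diagonalizability implicit.
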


\begin{proof}
To prove this lemma we need to find the eigenvalues of the matrix.   Its characteristic polynomial is $P(\lambda)= \lambda^2 + \lambda + 1$. Substituting $\xi$ to the characteristic polynomial, we obtain $P(\xi) = \xi^2+\xi+1$.  Note that $P(\xi)=\xi P(\xi)$, hence
$$
P(\xi)+\xi P(\xi)=0 \Rightarrow P(\xi)(1+\xi) =0\Rightarrow P(\xi)=0,
$$
Similarly for $\xi^2$. Therefore $\xi$ (and $\xi^2$) are roots of the characteristic polynomial.
\end{proof}

\subsection{Fields without third roots of unity}

Let $K$ be a field of characteristic 2 without non-trivial roots of the third power of unity.

\begin{lem}\label{trans}
For any finite number of mutually commuting  elements of the order $3$ in $\GL_n(K)$, there exists a basis in which all of them have block-diagonal form with $E_{11} + E_{12}+E_{21}, E_{22} + E_{12}+E_{21}$ and $E_{11} + E_{22}$ on their diagonals.
\end{lem}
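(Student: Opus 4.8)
The statement is the characteristic‑2 analogue of Lemma~\ref{trans} in the case where $K$ contains no nontrivial cube root of unity, so the relevant "building blocks'' for an order‑$3$ matrix over $K$ are the $2\times2$ rational canonical block
$\begin{pmatrix}1&1\\1&0\end{pmatrix}$ (companion matrix of $\lambda^2+\lambda+1$, which is irreducible over $K$ here) together with the $1\times1$ identity block $1$; written additively in terms of matrix units these are $E_{11}+E_{12}+E_{21}$, its "transposed'' variant $E_{22}+E_{12}+E_{21}$, and $E_{11}+E_{22}$. So what has to be shown is: a \emph{finite commuting family} of order‑$3$ elements of $\GL_n(K)$ can be \emph{simultaneously} put in this block form.

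The plan is to start from the single‑matrix case and then simultaneously diagonalize the family via the usual invariant‑subspace argument. First I would record that any $g\in\GL_n(K)$ with $g^3=E$ satisfies $g^3-E=(g-E)(g^2+g+E)=0$; since $\mathrm{char}\,K=2$ and $\gcd(\lambda-1,\lambda^2+\lambda+1)=1$ in $K[\lambda]$ (evaluate the second polynomial at $1$ to get $3=1\neq0$), the space $V=K^n$ splits canonically as $V=\ker(g-E)\oplus\ker(g^2+g+E)$, both summands $g$‑invariant. On the first summand $g$ acts as the identity; on the second, $\lambda^2+\lambda+1$ is the minimal polynomial and, being irreducible over $K$ (no cube root of unity), the summand is a $K[g]$‑module that is a direct sum of copies of the field $K[\lambda]/(\lambda^2+\lambda+1)$, hence breaks into $2$‑dimensional $g$‑invariant subspaces on each of which $g$ is conjugate to the companion matrix $E_{11}+E_{12}+E_{21}$. (One gets the "transposed'' block $E_{22}+E_{12}+E_{21}$ from the same class; allowing both in the statement is merely a convenience for how the matrices will be padded into the stable group, exactly as in the remark following the cube‑root version of Lemma~\ref{trans}.)

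For the commuting family $g_1,\dots,g_m$ the step is the standard one: the decomposition $V=\ker(g_1-E)\oplus\ker(g_1^2+g_1+E)$ is preserved by every $g_i$ because the $g_i$ commute with $g_1$ and hence with any polynomial in $g_1$; restrict to each summand and induct on $m$ and on $\dim V$. The only point needing care is the irreducible summand: there $K[g_1]\cong K[\lambda]/(\lambda^2+\lambda+1)=:L$ is a field, so the summand is an $L$‑vector space, each $g_i$ restricted to it is an $L$‑linear automorphism of order dividing $3$, and $L$ already \emph{does} contain cube roots of unity (it is the degree‑$2$ extension $\mathbb F_4$‑type extension), so over $L$ the remaining $g_i$ are simultaneously $L$‑diagonalizable with entries in $\{1,\omega,\omega^2\}\subset L$ by Lemma~\ref{trans} (the cube‑root case, applied over $L$); choosing an $L$‑basis and writing each $\omega\in L$ back as the $2\times2$ block $E_{11}+E_{12}+E_{21}$ over $K$ yields the claimed common block form over $K$. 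Collecting the bases on all summands gives the desired simultaneous basis.

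The main obstacle is precisely this last passage: one must be sure that after reducing to the irreducible piece the \emph{whole} commuting family can be handled, not just $g_1$, and that "scalars'' there really are cube roots of unity living in $L$ rather than in $K$ — i.e. that the characteristic‑$2$, no‑cube‑root hypothesis is used only to force the $2\times2$ block and is automatically lifted once we pass to $L=K[\lambda]/(\lambda^2+\lambda+1)$. Everything else (the stable‑group version, and allowing the transposed block) follows as in the cube‑root case by fixing finitely many coordinates, exactly as in the remark after Lemma~\ref{trans}.
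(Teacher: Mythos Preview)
Your plan is correct. The paper itself does not prove this lemma at all: it simply states that the result is proved in~\cite{kaleeva} for $\GL_n$ and remarks that the extension to $\GL$ is obvious. So there is nothing in the paper to compare your argument against; what you have written is a genuine, self-contained proof where the paper offers only a citation.

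One small sharpening: your remark that allowing the block $E_{22}+E_{12}+E_{21}$ alongside $E_{11}+E_{12}+E_{21}$ is ``merely a convenience'' undersells the point. With the $K$-basis $\{1,\omega\}$ of $L=K[\lambda]/(\lambda^{2}+\lambda+1)$, multiplication by $1,\omega,\omega^{2}\in L$ is represented over $K$ by exactly the three blocks $E_{11}+E_{22}$, $E_{22}+E_{12}+E_{21}$, $E_{11}+E_{12}+E_{21}$ respectively. For a \emph{single} matrix one could of course rechoose bases so that only one non-identity block type occurs, but for a commuting family sharing the same $K$-basis of each $L$-line, different members will generally have $\omega$ and $\omega^{2}$ in the same $L$-coordinate, so both non-identity blocks are genuinely needed in the simultaneous form. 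This is consistent with your argument; it just explains why the statement lists all three blocks.
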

This lemma is proved in \cite{kaleeva} for the elements from $\GL_n$, but it is obviously correct for the group $\GL$  too.

Denote  $T = E_{11} + E_{12}+E_{21}$ and $D_k = \diag [\underbrace{T,T,\ldots,T,T}_{k},1, \ldots]$.

\begin{lem}
If $k>1$, then $||| \{AB : A\sim D_k, B \sim D_k, AB = BA, A, B\in \GL(K)\} ||| >3$.
\end{lem}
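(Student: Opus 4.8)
The plan is to mirror the argument used in the lemma for fields containing $\xi$: exhibit explicitly more than three pairwise non-conjugate products $AB$ with $A\sim D_k$, $B\sim D_k$, $AB=BA$. First I would note that, exactly as in the $\xi$-case, it suffices to work with a single pair of the $2\times2$ blocks $T$ (keeping the other $k-1$ blocks fixed at the first positions, where they contribute the same factor to every product) and to restrict attention to block-permutations of the $T$'s and to scalar $2\times2$ blocks. So the real content is a statement about $\GL_4(K)$ (or, with the fixed blocks, about a $4\times4$ corner): the set of products of two commuting elements each conjugate to $\mathrm{diag}[T,T]$ in $\GL_4(K)$ has more than three conjugacy classes.

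The key computation is then to produce four such products with distinct conjugacy invariants. Over $K$ without cube roots of unity, a matrix of order $3$ is determined up to conjugacy by how many $T$-blocks and how many trivial ($1$) blocks it has (Lemma~\ref{trans}), equivalently by its rational canonical form, equivalently by the multiplicity of the irreducible factor $\lambda^2+\lambda+1$ in its characteristic polynomial. Concretely I would take, writing $I$ for $E_{11}+E_{22}$:
\begin{align*}
\mathrm{diag}[T,T]\cdot\mathrm{diag}[T,T]&=\mathrm{diag}[T^2,T^2],\\
\mathrm{diag}[T,T]\cdot\mathrm{diag}[I,T]&=\mathrm{diag}[T,T^2],\\
\mathrm{diag}[T,T]\cdot\mathrm{diag}[I,I]&=\mathrm{diag}[T,T],\\
\mathrm{diag}[T,T]\cdot(\text{a conjugate of }\mathrm{diag}[T,T]\text{ by a ``rotation'' mixing the two blocks})&=E.
\end{align*}
Since $T^2=T^{-1}$ is again conjugate to $T$, each second factor is genuinely conjugate to $\mathrm{diag}[T,T]=D_2$ (here using $k>1$, so there really are two blocks to play with), and all four factors commute with the first by construction. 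The first three products have $\lambda^2+\lambda+1$ appearing with multiplicity $2$, $2$, $2$ respectively but differ because $\mathrm{diag}[T^2,T^2]$, $\mathrm{diag}[T,T^2]$, $\mathrm{diag}[T,T]$ — wait, these coincide up to conjugacy — so more care is needed: I would instead choose the second factors so that the products are, respectively, of order $1$ (i.e.\ $E$), order $3$ with one $T$-block, order $3$ with two $T$-blocks, and a non-semisimple element (an element of order a power of $2$ times $3$, obtained when the product of two order-$3$ matrices is not itself of order $3$ — e.g.\ a unipotent times a rotation). That yields at least four distinct conjugacy classes: the scalar $E$, and (for $k>1$) elements whose order or whose Jordan/rational form over $K$ differ.

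The main obstacle is precisely producing the \emph{fourth} class: with only permutations of blocks and scalar blocks one tends to land in just the finitely many ``number-of-$T$-blocks'' classes among order-$3$ elements, and one must check these give more than three — which over fields without $\xi$ is delicate because two order-$3$ matrices with the same block count are conjugate. So I expect the proof to exploit a conjugating matrix $U$ that genuinely mixes two $T$-blocks (a $4\times4$ ``rotation'' preserving the relevant bilinear form), so that $D_k\cdot(UD_kU^{-1})$ has a Jordan structure (over $\overline K$, a nontrivial $2\times2$ unipotent block, since in characteristic $2$ two commuting order-$3$ elements can multiply to something non-semisimple only if... — actually one must verify this is possible, which is the crux) not realized by any block-diagonal product. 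Once such a $U$ is found and the four products are confirmed pairwise non-conjugate by comparing characteristic polynomials and ranks of $(X-E)$, $(X^2+X+E)$, the lemma follows, and (as in the earlier parallel lemma) the reduction at the start guarantees the same bound in the stable group $\GL(K)$.
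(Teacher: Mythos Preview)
Your reduction to a $4\times4$ corner is the genuine gap, and it is precisely why you ran into trouble. If $A,B\in\GL_4(K)$ are commuting elements conjugate to $\diag[T,T]$, then by Lemma~\ref{trans} they are simultaneously block-diagonal with blocks in $\{T,T^{2}\}$; every product is then block-diagonal with blocks in $\{E,T,T^{2}\}$, and since $T\sim T^{2}$ you get exactly the three classes ``$0$, $1$, or $2$ non-trivial blocks''. That is why your first three displayed products collapse to a single class, and why your second factor $\diag[I,T]$ is illegitimate (it has only one $T$-block, so it is not conjugate to $\diag[T,T]$). Your fallback idea of producing a non-semisimple product cannot work either: $A^{3}=B^{3}=E$ and $AB=BA$ force $(AB)^{3}=E$, so the product is again an order-$3$ (or identity) element and hence block-diagonalizable with blocks $T,T^{2},E$.

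The fix, which is what the paper does, is to exploit the extra identity blocks available in the stable group rather than squeeze everything into $\GL_4$. Work with four $2\times2$ block positions (so inside $\GL_8\subset\GL$) and allow the two $T$-blocks of each factor to sit in different positions. With the pairs
\[
\diag[E,E,T,T]\cdot\diag[T,T,E,E],\quad
\diag[E,E,T,T]\cdot\diag[E,E,T^{2},T^{2}],\quad
\diag[E,E,T,T]\cdot\diag[E,E,T,T],\quad
\diag[E,E,T^{2},T]\cdot\diag[E,T,T^{2},E]
\]
(all factors genuinely conjugate to $D_{2}$ since $T^{2}\sim T$) the products have, respectively, $4$, $0$, $2$, $3$ non-trivial blocks, hence four distinct conjugacy classes. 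For general $k>1$ one keeps the remaining $k-2$ blocks identical in $A$ and $B$; they contribute the same fixed number of $T$-blocks to every product, so the four counts remain distinct. No mixing ``rotation'' or non-semisimple element is needed.
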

\begin{proof}
It is sufficient  to consider only two  blocks $T$, which exist in the block-diagonal form of~$A$.
By permutations of these two blocks we can obtain
\begin{align*}
\diag[E,E,T,T] \cdot \diag[T, T,E,E]& = \diag[T, T, T, T],\\
\diag[E,E,T,T] \cdot \diag[E,E,T^2,T^2] &= \diag[E, E, E, E],\\
\diag[E,E,T,T] \cdot \diag[E,E,T, T] &= \diag[E, E, T^2, T^2],\\
\diag[E,E,T^2,T] \cdot \diag[E, T,T^2,E] &= \diag[E, T, T, T],
\end{align*}
where $E$ is a unity matrix of order $2\times 2$. As it follows from Lemma~\ref{sim}, $T^2 \sim T$.
\end{proof}

This lemma gives us a formula that holds for matrix $A$ if and only if $A\sim D_1$:
\begin{equation}
\begin{split}
\varphi'(A) :=\ & \exists X_1 \exists X_2 \exists Y_1 \exists Y_2 \forall Z_1 \forall Z_2\  
(A^3 = E) \land \lnot(A = E)\ \land\\
&\land (X_1 \sim X_2 \sim Y_1 \sim Y_2 \sim A)\land(X_1X_2 = X_2X_1) \land (Y_1Y_2=Y_2Y_1)\ \land\\
&\land \Big(\big((Z_1 \sim A) \land (Z_2 \sim A) \land (Z_1Z_2=Z_2Z_1)\big) \rightarrow\\
& \rightarrow\big((Z_1Z_2 \sim X_1X_2)\lor(Z_1Z_2 \sim Y_1Y_2)\lor(Z_1Z_2 \sim E)\big)\Big).
\end{split}
\end{equation}

\begin{lem}\label{commi}
Let $A= (a_{ij})$ commute with
$$
G_k = \diag[\underbrace{1,1,\ldots, 1}_{k-1}, T, 1,1 \ldots],\text{ where }k \geqslant 1.
$$
Then
\begin{enumerate}
  \item \label{null_col} If $j + 1 < k$ or $k + 1 < j$, then
  $$
  \begin{pmatrix}
  a_{j,k} & a_{j,k+1}\\
  a_{j+1, k} & a_{j+1, k+1}
  \end{pmatrix}
  =
  \begin{pmatrix}
  0 & 0\\
  0 & 0
  \end{pmatrix}
  $$
  \item \label{null_row} If $j + 1 < k$ or $k + 1 < j$, then
  $$
  \begin{pmatrix}
  a_{k,j} & a_{k,j+1}\\
  a_{k+1, j} & a_{k+1, j+1}
  \end{pmatrix}
  =
  \begin{pmatrix}
  0 & 0\\
  0 & 0
  \end{pmatrix}
  $$
\item \label{diag_comm}
$$
\begin{pmatrix}
a_{k,k} & a_{k,k+1}\\
a_{k+1, k} & a_{k+1, k+1}
\end{pmatrix}
=
\begin{pmatrix}
a & b\\
b & a+b
\end{pmatrix},
$$
for some $a,b$.
\end{enumerate}
\end{lem}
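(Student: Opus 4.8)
The claim is a purely computational statement about the centralizer of a single block matrix $G_k$, which differs from the identity only in the $2\times2$ block occupying rows/columns $k,k+1$, where it equals $T=E_{11}+E_{12}+E_{21}=\begin{pmatrix}1&1\\1&0\end{pmatrix}$. The plan is to write the commutation relation $AG_k=G_kA$ entrywise and read off constraints block by block. Partition the index set $\{1,2,\dots\}$ into the "active pair" $P=\{k,k+1\}$ and the rest. Since $G_k$ acts as the identity on every index outside $P$ and as $T$ on $P$, multiplication $AG_k$ rescales/mixes only the columns indexed by $P$, while $G_kA$ rescales/mixes only the rows indexed by $P$. Concretely, $(AG_k)_{ij}=A_{ij}$ whenever $j\notin P$, and for $j\in P$ the pair $(( AG_k)_{i,k},(AG_k)_{i,k+1})$ equals $(a_{i,k}+a_{i,k+1},\,a_{i,k})$; dually for $G_kA$ with rows.

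\textbf{Step 1: the off-block entries (parts (1) and (2)).} Fix a row index $j$ with $\{j,j+1\}$ disjoint from $P=\{k,k+1\}$, i.e. $j+1<k$ or $k+1<j$. Comparing the $(j',m)$ entries of $AG_k$ and $G_kA$ for $j'\in\{j,j+1\}$ and $m\in\{k,k+1\}$: on the left side (i.e. $G_kA$) these rows $j,j+1$ are untouched by $G_k$ (they are not in $P$), so $(G_kA)_{j',m}=a_{j',m}$ for all $m$; on the right side $AG_k$ mixes columns $k,k+1$, giving $(AG_k)_{j',k}=a_{j',k}+a_{j',k+1}$ and $(AG_k)_{j',k+1}=a_{j',k}$. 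Setting these equal for $j'=j$ and $j'=j+1$ yields $a_{j',k}=a_{j',k}+a_{j',k+1}$ and $a_{j',k+1}=a_{j',k}$, hence $a_{j',k+1}=0$ and then $a_{j',k}=0$. Collecting the four entries gives the zero $2\times2$ block of part (1). Part (2) is the transpose situation: fix a column index $j$ with $\{j,j+1\}$ disjoint from $P$; then columns $j,j+1$ of $AG_k$ are untouched while rows $k,k+1$ of $G_kA$ are mixed, and the identical computation forces $a_{k+1,j'}=0$ and $a_{k,j'}=0$ for $j'\in\{j,j+1\}$.

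\textbf{Step 2: the central $2\times2$ block (part (3)).} Restrict attention to rows and columns $k,k+1$. Here both $AG_k$ and $G_kA$ reduce to ordinary $2\times2$ matrix products with $T$: writing $M=\begin{pmatrix}a_{k,k}&a_{k,k+1}\\ a_{k+1,k}&a_{k+1,k+1}\end{pmatrix}$, the restriction of $AG_k=G_kA$ to this block is exactly $MT=TM$. So I need: the centralizer of $T=\begin{pmatrix}1&1\\1&0\end{pmatrix}$ in $\Mat_2(K)$ consists of matrices of the form $\begin{pmatrix}a&b\\b&a+b\end{pmatrix}$. Expanding $MT$ and $TM$ and equating entries (in characteristic $2$) gives $a_{k+1,k}=a_{k,k+1}$ from one entry and $a_{k+1,k+1}=a_{k,k}+a_{k,k+1}$ from another, with the remaining equations redundant; setting $a=a_{k,k}$, $b=a_{k,k+1}$ finishes part (3). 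Equivalently, since $T$ satisfies $T^2=T+E$ and hence $\{E,T\}$ spans a $2$-dimensional commutative subalgebra, and $T$ has distinct eigenvalues over $\overline K$ so its centralizer is exactly $K[T]=\{aE+bT:a,b\in K\}$, and $aE+bT=\begin{pmatrix}a+b&b\\b&a\end{pmatrix}$ — matching the stated form after relabelling $a\leftrightarrow a$, $b\leftrightarrow b$.

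\textbf{Main obstacle.} There is essentially no deep obstacle here — the statement is bookkeeping. The only point requiring care is keeping the direction of the column-mixing versus row-mixing straight (which side of $AG_k=G_kA$ touches rows and which touches columns), and remembering that all arithmetic is over a field of characteristic $2$ so that signs disappear and, e.g., $a_{k+1,k+1}=a_{k,k}+a_{k,k+1}$ rather than $a_{k,k}-a_{k,k+1}$. One should also state once at the outset that "$A$ commutes with $G_k$" is being used in the stable group $\GL(K)\subseteq\Mat_\infty(K)$, so only finitely many $a_{ij}$ off the diagonal are nonzero and the entrywise manipulations are legitimate; this plays no role in the algebra but makes the indexing well-defined.
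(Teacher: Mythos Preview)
Your proof is correct and follows essentially the same approach as the paper: both compute the equality $AG_k=G_kA$ entrywise, using that left multiplication by $G_k$ affects only rows $k,k+1$ while right multiplication affects only columns $k,k+1$, and then read off the vanishing of the off-blocks and the form of the central $2\times2$ block. Your additional remark that the centralizer of $T$ in $\Mat_2(K)$ equals $K[T]$ is a nice conceptual gloss (note your ``relabelling $a\leftrightarrow a$, $b\leftrightarrow b$'' should really be $a\mapsto a+b$, $b\mapsto b$), but the core argument is identical to the paper's.
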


\begin{proof}
(1) Assume that $A$ commutes with $G_k$ and $B=(b_{ij}) = AG_k= G_k A$. Consider
$$
\begin{pmatrix}
b_{j,k} & b_{j,k+1}\\
b_{j+1, k} & b_{j+1, k+1}
\end{pmatrix}.
$$
When $A$ is  multiplied by $G_k$ from the left:
$$
\begin{pmatrix}
b_{j,k} & b_{j,k+1}\\
b_{j+1, k} & b_{j+1, k+1}
\end{pmatrix}
=
\begin{pmatrix}
a_{j,k} & a_{j,k+1}\\
a_{j+1, k} & a_{j+1, k+1}
\end{pmatrix}.
$$
When $A$  multiplied by $G_k$ from the right:
$$
\begin{pmatrix}
b_{j,k} & b_{j,k+1}\\
b_{j+1, k} & b_{j+1, k+1}
\end{pmatrix}
=
\begin{pmatrix}
a_{j,k} +  a_{j,k+1} & a_{j,k}\\
a_{j+1, k} + a_{j+1, k+1} & a_{j+1, k}
\end{pmatrix}.
$$
Then:
$$
\begin{pmatrix}
a_{j,k} & a_{j,k+1}\\
a_{j+1, k} & a_{j+1, k+1}
\end{pmatrix}
=
\begin{pmatrix}
a_{j,k} +  a_{j,k+1} & a_{j,k}\\
a_{j+1, k} + a_{j+1, k+1} & a_{j+1, k}
\end{pmatrix}.
$$
From the equality of left columns of both matrices we obtain  $a_{j,k+1} = a_{j+1, k+1}  = 0$. But then from the equality of right columns of both matrices we obtain that all other elements equal to zero too. The proposition (2) is proved in exactly the same way as proposition (1).

(3) We have
$$
\begin{pmatrix}
a_{k,k}+ a_{k+1, k} & a_{k,k+1}+ a_{k+1, k+1}\\
a_{k,k} & a_{k,k+1}
\end{pmatrix}
=
\begin{pmatrix}
a_{k,k} + a_{k,k+1} & a_{k,k}\\
a_{k+1, k} + a_{k+1, k+1} & a_{k+1, k}
\end{pmatrix}.
$$
Define $b:=a_{k,k+1} = a_{k+1,k}$, $a:= a_{kk}$, then $a_{k+1, k+1} = a + b$.
\end{proof}

Let $X$ be a matrix satisfied the formula $\varphi'$. Consider the formula
$$
\theta'(C) := (CX=XC)\land(C\sim X)
\land (CX \neq E)
\land (CX^2 \neq E).
$$
Let $C$ be a matrix such that $\theta'(C)$ is true. Let us fix a basis such that $X$ has a form $G_1$. Then from Lemma~\ref{commi} we have $c_{1j}=c_{i1} = 0, c_{2j}=c_{i2} = 0, i,j>2$. Also we have
$$
\begin{pmatrix}
c_{11} & c_{12}\\
c_{21} & c_{22}
\end{pmatrix}
=
\begin{pmatrix}
a & b\\
b & a+b
\end{pmatrix}
$$
and
$$
\begin{pmatrix}
a & b\\
b & a+b
\end{pmatrix}^3 = 
\begin{pmatrix}
1 & 0\\
0 & 1
\end{pmatrix}
$$
or
$$
\begin{pmatrix}
a^3 + ab^2 + b^3 & a^2b + ab^2\\
a^2b + ab^2 & a^3 + a^2b + b^3
\end{pmatrix} = 
\begin{pmatrix}
1 & 0\\
0 & 1
\end{pmatrix}.
$$
It follows that  $a^2b + ab^2 = ab(a+b) = 0$. There is no zero divisors in~$K$, therefore there are three possible cases:
\begin{enumerate}
  \item $b = 0$. Then $a^3 = 1 \Rightarrow a = 1$.
  \item $a = 0$. Then $b^3 = 1 \Rightarrow b = 1$.
  \item $a = b$. Then $ a^3 + ab^2 + b^3 = 1 \Rightarrow a^3 + a^3 + a^3 = 1 \Rightarrow a^3 = 1 \Rightarrow a = b = 1$.
\end{enumerate}
The first case corresponds to the unity matrix. The second and the third cases correspond to $
\begin{pmatrix}
c_{11} & c_{12}\\
c_{21} & c_{22}
\end{pmatrix}
=T$
and
$
\begin{pmatrix}
c_{11} & c_{12}\\
c_{21} & c_{22}
\end{pmatrix}
=T^2$.
Both thess cases are impossible due to the formula $\theta'$, because such $C$  multiplied by $X$ or $X^2$ gives the unity matrix.

So we peoved that $C$ has the form $\diag[1,1,T'],$  where  $T' \sim D_1$.

\subsection{Elementary definability of $\GL_2(K)$ and the main theorem}

Assume the formula $\theta$ holds only for matrices $\diag[1,1,T],$  where  $T \sim
\begin{pmatrix}
1 & 1\\
1 & 0
\end{pmatrix}$. Let us determine $\GL_2(K)$ using these matrices.

Let us notice that the formula $\theta$ holds for $G_k, k > 2$. If  $M$ commutes with $G_{2k+1}, k = 1,2,3, \ldots$, then $M = \diag[M_0, M_1, M_2, M_3, M_4, \ldots, M_k]$, where $M_0 \in \GL_2(K)$, and $M_i =
\begin{pmatrix}
a_i & b_i\\
b_i & a_i+b_i
\end{pmatrix}, i >0
$.

If $M$ commutes also with $G_{2k + 2}, k = 1,2,3, \ldots$,
then $M = \diag[M_0, a_1, M'_1, M'_2, M'_3, M'_4, \ldots, M'_{k'}]$, where $M'_i  =
\begin{pmatrix}
a'_i & b'_i\\
b'_i & a'_i+b'`_i
\end{pmatrix}
$.
Comparing  block structures of different  representations of $M$, we obtain
$$
M = \diag[M_0, M_1, M_2, M_3, M_4, \ldots, M_k] = \diag[M_0, a_1, M'_1, M'_2, M'_3, M'_4, \ldots, M'_{k'}].
$$
Therefore
$b_i = b'_i = 0$, $a_i = a'_i$ and $a'_i = a_{i+1}$. Hence $a_i = a_{i+1}$ for every $i$. It follows from the definition of the stable linear group that there is only a finite number of non-unit elements on the diagonal. Hence $a_i = 1$.

Therefore, we obtain a formula that defines a subgroup isomorphic to $\GL_2(K)$:
$$
\gamma(M) = \forall C \Big(\theta(C)\rightarrow (MC=CM)\Big).
$$

Now we are ready to prove the main theorem of this paper.

\begin{theo}
Let $K_1$ and $K_2$ be fields of characteristic 2. If the stable linear groups  $\GL(K_1)$ and $\GL(K_2)$  are elementarily equivalent, then the fields $K_1$ and $K_2$  are also elementarily equivalent.
\end{theo}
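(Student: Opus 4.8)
The plan is to use the work of \S 2.1--\S 2.3, which exhibits inside $\GL(K)$ a first-order definable subgroup isomorphic to $\GL_2(K)$, and then to interpret the field $K$ in $\GL_2(K)$; elementary equivalence is transported along each of these two steps by relativization of formulas, so $\GL(K_1)\equiv\GL(K_2)$ will yield $K_1\equiv K_2$.

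The first point to settle is that the two cases of \S 2.1 and \S 2.2 are distinguished by a single group sentence, so that the definition of $\GL_2(K)$ becomes case-free. Take
$$
\sigma\ :=\ \exists A\,\Big(A^3=E\ \wedge\ A\neq E\ \wedge\ \exists U_1\exists U_2\,\forall Z_1\forall Z_2\big[(Z_1\sim A\wedge Z_2\sim A\wedge Z_1Z_2=Z_2Z_1)\rightarrow(Z_1Z_2\sim U_1\vee Z_1Z_2\sim U_2)\big]\Big),
$$
which says that there is a non-central element of order $3$ whose commuting conjugates have at most two conjugacy classes of products. By the lemmas of \S 2.1 the matrix $\diag[\xi]$ witnesses $\sigma$ when $\xi\in K$, and by the lemmas of \S 2.2 every non-central element of order $3$ in $\GL(K)$ has at least three such classes when $\xi\notin K$; hence $\GL(K)\models\sigma$ exactly when $K$ contains a primitive cube root of unity. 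So $\GL(K_1)\equiv\GL(K_2)$ places $K_1$ and $K_2$ in the same case, and there the formula $\gamma$ of \S 2.3 --- assembled from $\theta$ when $\sigma$ holds and from $\theta'$ when it fails --- is one and the same formula defining a subgroup isomorphic to $\GL_2(K_1)$, resp. $\GL_2(K_2)$.

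Now descend to $\GL_2$: for a first-order sentence $\tau$ in the language of groups let $\tau^{\gamma}$ be its relativization to $\gamma$ (bound every quantifier by $\gamma$). Since $\gamma(\GL(K))$ is a subgroup isomorphic to $\GL_2(K)$, we have $\GL_2(K)\models\tau$ iff $\GL(K)\models\tau^{\gamma}$, and as $\gamma$ is the same formula for both fields, $\GL(K_1)\equiv\GL(K_2)$ implies $\GL_2(K_1)\equiv\GL_2(K_2)$.

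It remains to interpret $K$ in $\GL_2(K)$, which is the part where the characteristic-$2$ setting matters most and which I regard as the main obstacle of this last segment. In characteristic $2$ an element $g\neq E$ with $g^2=E$ is exactly a transvection, and all transvections are conjugate; a maximal abelian subgroup whose non-identity elements are all transvections is exactly a root subgroup $U\cong(K,+)$; and, fixing such a $U$ together with an opposite root subgroup $U'$, the conjugation action of the normalizer of $U$ on $U$ realizes the multiplication of $K^{\times}$ while the $\SL_2$-relations tying $U$ to $U'$ pin down the remaining field structure. This interpretation of $K$ in $\GL_2(K)$ is uniform, so $\GL_2(K_1)\equiv\GL_2(K_2)$ gives $K_1\equiv K_2$. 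The delicate points here are precisely the characteristic-$2$ adjustments (there is no $-1$, and $\PGL_2$ coincides with $\PSL_2$); the possible degeneracy of the interpretation for very small fields does not bite, since a field elementarily equivalent to a finite field equals it, so the finite case is settled by cardinality and for infinite $K$ the recipe above applies verbatim. This proves the theorem.
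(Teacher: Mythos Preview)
Your argument follows the paper's two-step strategy exactly: elementarily define a copy of $\GL_2(K)$ inside $\GL(K)$ using the work of \S 2.1--\S 2.3, and then pass from $\GL_2(K_1)\equiv\GL_2(K_2)$ to $K_1\equiv K_2$. The paper dispatches the second step by quoting the generalized Mal'tsev theorem from~\cite{bunina}; you instead sketch a direct interpretation of $K$ in $\GL_2(K)$ via transvections and root subgroups. Your sketch is correct in outline (in characteristic $2$ every nontrivial involution of $\GL_2(K)$ is a transvection, and the maximal abelian groups of such are precisely the root subgroups $\cong(K,+)$), though for a complete write-up you would need to spell out how the $\SL_2$-relations recover the multiplication --- at which point you are essentially reproving the cited theorem.

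One genuine improvement in your version: you make explicit the sentence $\sigma$ separating the two cases of \S 2.1 and \S 2.2, so that the defining formula for $\GL_2$ can be chosen uniformly once $\GL(K_1)\equiv\GL(K_2)$ forces both fields into the same case. The paper's proof tacitly assumes this uniformity without writing down such a $\sigma$, and your observation that $\sigma$ is essentially $\exists A\,\varphi(A)$ (witnessed by $\diag[\xi]$ when $\xi\in K$, and failing when $\xi\notin K$ because even $D_1$ already yields three classes $E,D_1,D_2$) fills that gap cleanly. You should also note --- as the paper does not --- that the formulas $\theta,\theta'$ carry parameters ($A,X_1,X_2$, resp.\ $X$), so the relativization argument needs the remark that any admissible choice of parameters yields a conjugate copy of $\GL_2(K)$, whence the elementary type of $\gamma(\GL(K);\bar p)$ is independent of~$\bar p$.
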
 

\begin{proof}
Let $\GL(K_1) \equiv \GL(K_2)$. Since in the previous lemmas we proved that the subgroup $\GL_2$ is elementarily defined in $\GL$ for a field of characteristic~$2$, this means that $\GL_2(K_1)\equiv \GL_2(K_2)$. 
By the generalization of  Maltsev Theorem proved for example in~\cite{bunina}  this implies $K_1\equiv K_2$.
\end{proof}

\end{document}